\newcommand{\weak}{\stackrel{w}{\longrightarrow}}
\newcommand{\prob}{\stackrel{P}{\longrightarrow}}
\newcommand{\eid}{\stackrel{d}{=}}
\newcommand{\one}{{\bf 1}}
\newcommand{\reals}{{\mathbb R}}
\newcommand{\bbr}{\reals}
\newcommand{\vep}{\varepsilon}
\newcommand{\bbz}{\protect{\mathbb Z}}
\newcommand{\bbn}{\protect{\mathbb N}}
\newtheorem{theorem}{Theorem}[section]
\newtheorem{corollary}{Corollary}[section]
\newtheorem{fact}{Fact}[section]
\newtheorem{lemma}{Lemma}[section]
\newtheorem{remark}{Remark}
\newtheoremstyle{example}{\topsep}{\topsep}%
     {}
     {}
     {\bfseries}
     {}
     {\newline}
     {\thmname{#1}\thmnumber{ #2}\thmnote{ #3}}
\theoremstyle{example}
\newtheorem*{example}{Example}
\def\ol{\overline}
\def\chs{\cite{chakrabarty:hazra:sarkar:2014}}
\def\Var{{\rm Var}}
\def\E{{\rm E}}
\def\esd{{\rm ESD}}
\DeclareMathOperator{\Tr}{Tr} 
\DeclareMathOperator{\rank}{Rank}
\numberwithin{equation}{section}
\begin{document}

\title[Hadamard product]{The Hadamard product and the free convolutions}
\dedicatory{Dedicated to Prof. B. V. Rao on his 70th birthday}
\date{November 12, 2015}

\author[A. Chakrabarty]{Arijit Chakrabarty}
\address{Theoretical Statistics and Mathematics Unit, Indian Statistical
Institute, New Delhi}
\email{arijit@isid.ac.in}

\keywords{free additive and multiplicative convolution, Hadamard product, random matrix, stationary Gaussian process}

\subjclass[2010]{Primary 60B20; Secondary 46L54}

\begin{abstract}
It is shown that if a probability measure $\nu$ is supported on a closed subset of $(0,\infty)$, that is, its support is bounded away from zero, then the free multiplicative convolution of $\nu$ and the semicircle law is absolutely continuous with respect to the Lebesgue measure. For the proof, a result concerning the Hadamard product of a deterministic matrix and a scaled Wigner matrix is proved and subsequently used. As a byproduct, a result, showing that the limiting spectral distribution of the Hadamard product is same as that of a symmetric random matrix with entries from a mean zero stationary Gaussian process, is obtained.
\end{abstract}

\maketitle

\section{Introduction}\label{sec:intro} In a recent paper \cite{chakrabarty:hazra:2015}, it is shown that if $\nu$ is a probability measure such that $\nu([\alpha,\infty))=1$ for some $\alpha>0$ and $\int_0^\infty x\nu(dx)<\infty$, then the free multiplicative convolution of $\nu$ and the semicircle law, defined below in \eqref{eq.defsc}, is absolutely continuous. In that paper, it is conjectured that the result should be true without the assumption that the mean is finite, although the methodology of that paper does not allow the removal of this assumption. This is the main goal of the current paper. Theorem \ref{t2} shows that if the probability measure $\nu$ is supported on a subset of the positive half line, which is bounded away from zero, then the free multiplicative convolution of $\nu$ and the semicircle law has a non-trivial semicircle component in the sense of free additive convolution. In other words, there exists a probability measure $\eta$ such that 
\begin{equation}\label{eq.intro}
\nu\boxtimes\mu_1=\eta\boxplus\mu_\alpha\,,
\end{equation}
where $\mu_t$ is the semicircle law with standard deviation $t$, defined in \eqref{eq.defsc} and $\boxtimes$ and $\boxplus$ denote the free multiplicative and additive convolutions respectively. 
This is precisely the result proved in \cite{chakrabarty:hazra:2015}, albeit with the additional assumption that $\nu$ has finite mean. Theorem \ref{t2} and its corollary that $\nu\boxtimes\mu_1$ is absolutely continuous with respect to the Lebesgue measure, complement a corresponding result for the free additive convolution, proved in \cite{biane:1997}. 

The proof of Theorem \ref{t2} is via the analysis of random matrices of the type
\[
\left(f\left(\frac i{N+1},\frac j{N+1}\right)X_{i\wedge j,i\vee j}/\sqrt N\right)_{1\le i,j\le N}\,,
\]
where $f$ is a function on $(0,1)^2$ satisfying certain regularity properties, and $\{X_{i,j}:1\le i\le j\}$ is a family of i.i.d. standard normal random variables. This random matrix is studied in Section \ref{sec:hp}, and the observations are summarized in Theorem \ref{t1}. In Section \ref{sec:fc}, Theorem \ref{t1} is used to prove Theorem \ref{t2} which is the main result of this paper.

It turns out that the limiting spectral distribution obtained in Theorem \ref{t1} is same as that of a symmetric random matrix whose entries come from a stationary mean zero Gaussian process. Such random matrices were studied in \chs. The proof of this intriguing observation follows from equating the moments of the limiting spectral distributions obtained in the two models. This has been done in Section \ref{sec:sgp}, and the observation mentioned above is stated as Theorem \ref{t3}.

We conclude this section by pointing out the analogue of Theorem \ref{t2} in classical probability. The classical analogue is that if $X$ and $G$ are independent random variables, the latter following standard normal, then there exists a random variable $Y$ independent of $G$ such that
\[
XG\eid\sigma Y+G\,,
\]
if and only if 
\[
P(|X|\ge\sigma)=1\,.
\]
This can be proved using elementary probability tools. Theorem \ref{t2} is the free analogue of the \emph{if} part of the above result. The author believes that the free analogue of the \emph{only if} part is also true, that is, if \eqref{eq.intro} holds, then necessarily $\nu([\alpha,\infty))=1$. However, the methods of the current paper do not immediately prove the converse.

\section{The Hadamard product}\label{sec:hp}The following notations will be used throughout the paper. The $(i,j)$-th entry of a matrix $A$ will be denoted by $A(i,j)$. For  two $m\times n$ matrices $A$ and $B$, the Hadamard product of $A$ and $B$, denoted by $A\circ B$, is defined as
\[
(A\circ B)(i,j):=A(i,j)B(i,j),\,1\le i\le m,1\le j\le n\,.
\]
In other words, the Hadamard product is same as entrywise multiplication. 

Let $\mathcal R$ be the class of functions $f:(0,1)\times(0,1)\longrightarrow[0,\infty)$ such that 
\begin{enumerate}
\item\label{defr1} for all $0<\vep<1/2$, $f$ is bounded on $[\vep,1-\vep]^2$,
\item\label{defr2}  the set of discontinuities of $f$ in $(0,1)^2$ has Lebesgue measure zero,
\item\label{defr3}  and $f(x,y)=f(y,x)$ for all $(x,y)\in(0,1)^2$.
\end{enumerate}
Conditions \eqref{defr1} and \eqref{defr2} together are equivalent to assuming that $f$ is Riemann integrable on any compact subset of $(0,1)^2$, and hence the letter `R' has been used. However,  $\mathcal R$ is strictly larger than the class of Riemann integrable functions on $(0,1)^2$ satisfying \eqref{defr3}.

Fix $f\in\mathcal R$. For all $N\ge1$, define a $N\times N$ matrix $A_{f,N}$ by
\begin{equation}\label{eq.defan}
A_{f,N}(i,j):=f\left(\frac i{N+1},\frac j{N+1}\right),\,1\le i,j\le N\,.
\end{equation}
Let $\{X_{i,j}:1\le i\le j\}$ be a family of i.i.d. standard normal random variables, and let $W_N$ be a $N\times N$ scaled Wigner matrix formed by them. That is,
\begin{equation}\label{eq.defwn}
W_N(i,j):=N^{-1/2}X_{i\wedge j,i\vee j},\,1\le i,j\le N\,.
\end{equation}
Define
\begin{equation}\label{eq.defzn}
Z_N:=A_{f,N}\circ W_N,\,N\ge1\,.
\end{equation}

The main result of this section is Theorem \ref{t1} below, which studies the LSD of $Z_N$, as $N\to\infty$. Before stating that result, we need to recall a few combinatorial notions. The reader can find a detailed discussion on these topics in \cite{nica:speicher:2006}. For all $m\ge1$, let $NC_2(2m)$ denote the set of all \emph{non-crossing pair partitions} of $\{1,\ldots,2m\}$. Fix $m\ge1$, and $\sigma\in NC_2(2m)$. Let $(V_1,\ldots,V_{m+1})$ denote the \emph{Kreweras complement} of $\sigma$, that is the maximal partition $\ol\sigma$ of $\{\ol1,\ldots,\ol{2m}\}$ such that $\sigma\cup\ol\sigma$ is a non-crossing partition of $\{1,\ol1,\ldots,2m,\ol{2m}\}$. Note that the Kreweras complement of an element in $NC_2(2m)$ has exactly $(m+1)$ blocks, and hence is not a pair partition, although it is still non-crossing. For the sake of an unique labeling of the $V_i$'s, we require that if $1\le i<j\le m+1$, then the \emph{maximal} element of $V_i$ is smaller than that of $V_j$. Denote by ${\mathcal T}_\sigma$ the function from $\{1,\ldots,2m\}$ to $\{1,\ldots,m+1\}$ satisfying
\[
i\in V_{{\mathcal T}_\sigma(i)},\,1\le i\le2m\,.
\]
The above notations have been introduced in \chs. For $\sigma\in NC_2(2m)$ and any function $f:(0,1)^2\longrightarrow\bbr$, define a function $L_{\sigma,f}:(0,1)^{m+1}\longrightarrow\bbr$ by
\[
L_{\sigma,f}(x_1,\ldots,x_{m+1}):=\prod_{(u,v)\in\sigma}f^2\left(x_{{\mathcal T}_\sigma(u)},x_{{\mathcal T}_\sigma(v)}\right)\,.
\]

\begin{theorem}\label{t1}
There exists a non-random probability measure $\mu_f$ on $\bbr$, which is symmetric about zero, such that 
\begin{equation}\label{t1.eq1}
\esd\left(Z_N\right)\to\mu_f\,,
\end{equation}
as $N\to\infty$, weakly in probability. If, in addition, $f\in L^\infty\left((0,1)^2\right)$, then $\mu_f$ is compactly supported, and 
\begin{equation}\label{t1.eq2}
\int_\bbr x^{2n}\mu_f(dx)=\sum_{\sigma\in NC(2n)}\int_{(0,1)^{n+1}}L_{\sigma,f}(x_1,\ldots,x_{n+1})dx_1\ldots dx_{n+1}\,.
\end{equation}
\end{theorem}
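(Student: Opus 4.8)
The plan is to establish \eqref{t1.eq1} and \eqref{t1.eq2} by the moment method, following the standard recipe for Wigner-type matrices but keeping careful track of the deterministic weights $f(i/(N+1),j/(N+1))$. First I would treat the bounded case $f\in L^\infty((0,1)^2)$, prove the moment formula \eqref{t1.eq2}, check that the right-hand side defines a determinate moment sequence of a compactly supported symmetric probability measure $\mu_f$, and deduce weak convergence in probability; the general case will then follow by a truncation argument.

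\emph{Step 1 (moment computation, bounded case).} For $n\ge 1$ write
\[
\E\left[\frac1N\Tr\left(Z_N^{2n}\right)\right]
=\frac1{N^{n+1}}\sum_{i_1,\ldots,i_{2n}=1}^{N}
\left(\prod_{k=1}^{2n} f\!\left(\tfrac{i_k}{N+1},\tfrac{i_{k+1}}{N+1}\right)\right)
\E\!\left[\prod_{k=1}^{2n} X_{i_k\wedge i_{k+1},\,i_k\vee i_{k+1}}\right],
\]
with the cyclic convention $i_{2n+1}=i_1$. By the Wick/Isserlis formula the Gaussian expectation is a sum over pair partitions $\pi$ of $\{1,\ldots,2n\}$ of the product of covariances, each covariance forcing the corresponding pair of unordered index-edges to coincide. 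The leading-order contribution (those terms giving $N^{n+1}$ free index choices) comes exactly from the non-crossing pair partitions $\sigma\in NC_2(2n)$; crossing pairings and partitions with larger blocks contribute $O(N^{-1})$, by the usual genus/Euler-characteristic count. For a fixed $\sigma\in NC_2(2n)$, identifying the indices forced equal by $\sigma$ is precisely the statement that the free indices are indexed by the blocks of the Kreweras complement, i.e.\ by $\mathcal T_\sigma$; thus $i_k=i'_{\mathcal T_\sigma(k)}$ for independent indices $i'_1,\ldots,i'_{n+1}$ ranging over $\{1,\ldots,N\}$, and the weight product becomes $L_{\sigma,f}$ evaluated at $(i'_1/(N+1),\ldots,i'_{n+1}/(N+1))$. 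Hence
\[
\E\left[\frac1N\Tr\left(Z_N^{2n}\right)\right]
=\sum_{\sigma\in NC_2(2n)}\frac1{N^{n+1}}\sum_{i'_1,\ldots,i'_{n+1}=1}^{N}
L_{\sigma,f}\!\left(\tfrac{i'_1}{N+1},\ldots,\tfrac{i'_{n+1}}{N+1}\right)+O(N^{-1}).
\]
Since $f\in\mathcal R$ and $f\in L^\infty$, the function $L_{\sigma,f}$ is bounded on $(0,1)^{n+1}$ and its discontinuity set has Lebesgue measure zero (it is a finite product of compositions of $f^2$ with coordinate projections, and Riemann integrability is preserved), so the Riemann sum converges to $\int_{(0,1)^{n+1}}L_{\sigma,f}\,dx_1\cdots dx_{n+1}$. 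This gives \eqref{t1.eq2} for the limit of the mean moments. Odd moments vanish because an odd number of i.i.d.\ mean-zero Gaussians has zero expectation, which also yields the symmetry of $\mu_f$.

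\emph{Step 2 (concentration).} To upgrade from convergence of $\E[\frac1N\Tr(Z_N^{2n})]$ to convergence in probability of $\frac1N\Tr(Z_N^{2n})$, I would bound $\Var(\frac1N\Tr(Z_N^{2n}))=O(N^{-2})$ by the standard argument: expand the variance as a sum over pairs of closed walks of length $2n$, observe that the Wick pairing must connect the two walks, and count that such ``connected'' configurations lose at least one power of $N$ relative to the mean-squared leading term; the boundedness of $f$ makes all weight factors uniformly $O(1)$, so the combinatorics is exactly as in the unweighted Wigner case. By Borel--Cantelli, or just Chebyshev, $\frac1N\Tr(Z_N^{2n})\to m_{2n}$ in probability for every $n$, where $m_{2n}$ is the right-hand side of \eqref{t1.eq2} and $m_{2n+1}=0$.

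\emph{Step 3 (from moments to the measure).} Since $f\in L^\infty$, say $\|f\|_\infty\le C$, we have $0\le L_{\sigma,f}\le C^{2n}$, and $|NC_2(2n)|$ is the Catalan number $C_n\le 4^n$, so $m_{2n}\le (2C)^{2n}$. Thus $(m_{2n})$ satisfies Carleman's condition and is the moment sequence of a unique probability measure $\mu_f$, which is symmetric (all odd moments vanish) and compactly supported in $[-2C,2C]$. Convergence in probability of all moments of $\esd(Z_N)$ to those of a compactly supported $\mu_f$ implies $\esd(Z_N)\to\mu_f$ weakly in probability; this is the usual Weierstrass-approximation argument, using a deterministic a priori bound on the operator norm — or, more simply, the fact that convergence of all moments to a determinate limit implies weak convergence, applied along subsequences together with a tightness estimate coming from, e.g., the second moment.

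\emph{Step 4 (general $f\in\mathcal R$).} For unbounded $f$, put $f_K:=f\wedge K$, which lies in $\mathcal R\cap L^\infty$, and let $Z_N^{(K)}:=A_{f_K,N}\circ W_N$. Step 1--3 give $\esd(Z_N^{(K)})\to\mu_{f_K}$ weakly in probability. I would show that $\mu_{f_K}$ converges weakly as $K\to\infty$ to some $\mu_f$ by a monotonicity/Cauchy argument: since $L_{\sigma,f_K}\uparrow L_{\sigma,f}$ pointwise, the moments $m_{2n}(K)$ increase to $\int L_{\sigma,f}$ summed over $\sigma$; condition \eqref{defr1}, i.e.\ local boundedness of $f$ away from the boundary, together with \eqref{defr2}, ensures these limiting integrals are finite for every $n$ (the only possible blow-up is near the boundary of the square, a set of measure zero, and one checks $\int_{(0,1)^{n+1}}L_{\sigma,f}<\infty$ directly), so the limiting moment sequence again satisfies Carleman's condition and defines a symmetric probability measure $\mu_f$. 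Finally I would control the difference $\esd(Z_N)-\esd(Z_N^{(K)})$ uniformly in $N$: the rank or Hilbert--Schmidt norm of $Z_N-Z_N^{(K)}$ is governed by $\frac1N\sum_{i,j}(f-f_K)^2(i/(N+1),j/(N+1))\,X_{i\wedge j,i\vee j}^2$, whose expectation tends to $\int_{(0,1)^2}(f-f_K)^2\,dx\,dy$ as $N\to\infty$ and then to $0$ as $K\to\infty$ (using $f\in\mathcal R$ so $(f-f_K)^2\downarrow 0$ a.e.\ with an integrable-in-the-limit bound near the boundary); combined with the Hoffman--Wielandt inequality (or the rank inequality of Bai), this gives that the Lévy distance between $\esd(Z_N)$ and $\esd(Z_N^{(K)})$ is small in probability, uniformly in $N$, for $K$ large. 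A standard three-$\epsilon$ argument then yields $\esd(Z_N)\to\mu_f$ weakly in probability.

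\emph{Main obstacle.} The routine parts are Steps 1--3, which are the textbook Wigner moment method with bounded deterministic weights carried along. The genuinely delicate point is Step 4: verifying that the limiting moments $\sum_{\sigma\in NC_2(2n)}\int_{(0,1)^{n+1}}L_{\sigma,f}\,dx_1\cdots dx_{n+1}$ are \emph{finite} for all $n$ under only the local-boundedness hypothesis \eqref{defr1} (rather than global boundedness), and that the truncation error between $Z_N$ and $Z_N^{(K)}$ is controlled \emph{uniformly in $N$}. Both reduce to an integrability estimate for $L_{\sigma,f}$ near $\partial((0,1)^{n+1})$, where the combinatorial structure of the Kreweras complement matters: one needs that in each monomial $\prod_{(u,v)\in\sigma}f^2(x_{\mathcal T_\sigma(u)},x_{\mathcal T_\sigma(v)})$ every variable $x_\ell$ that is allowed to approach the boundary does so with enough ``companion'' variables staying interior that the product stays integrable. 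I expect this to be the part requiring the most care, and it is presumably where the hypothesis that only the discontinuity set, not the unboundedness locus, is negligible gets used.
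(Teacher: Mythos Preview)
Your Steps 1--3 are essentially the paper's Lemma \ref{l1} and Lemma \ref{l2}: for bounded $f\in\mathcal R$ the moment method goes through exactly as you outline, and the bound $m_n\le\|f\|_\infty^{2n}\,\#NC_2(2n)$ gives compact support and determinacy.

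The genuine gap is in Step~4. Your value truncation $f_K:=f\wedge K$ does not give the uniform-in-$N$ control you claim, for two related reasons. First, the integrals $\int_{(0,1)^{n+1}}L_{\sigma,f}$ need \emph{not} be finite for general $f\in\mathcal R$: take $f(x,y)=g(x)+g(y)$ with $g(x)=1/(x(1-x))$, which lies in $\mathcal R$ but has $\int_{(0,1)^2}f^2=\infty$, so already the second-moment integral diverges. The theorem does not assert \eqref{t1.eq2} without the extra hypothesis $f\in L^\infty$, and indeed $\mu_f$ may have no finite moments; defining $\mu_f$ via a limiting moment sequence and invoking Carleman therefore cannot work in general. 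Second, the Hilbert--Schmidt estimate fails for the same example: the weight at entry $(1,j)$ of $Z_N-Z_N^{(K)}$ is of order $N$, so $\E\bigl[\tfrac1N\|Z_N-Z_N^{(K)}\|_F^2\bigr]$ blows up with $N$ for every fixed $K$. The rank alternative you mention does not help either, because under value truncation the set $\{f>K\}$ is not confined to a small number of rows and columns, so $\rank(Z_N-Z_N^{(K)})$ can be $N$.

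The paper's device is to truncate the \emph{domain} rather than the range: set $f_k:=f\,\one_{[1/k,1-1/k]^2}$. Condition \eqref{defr1} is exactly what makes each $f_k$ bounded, and now $Z_{N,k}-Z_N$ is supported on at most $4(N+1)/k$ rows and columns, so the rank inequality for empirical spectral distributions gives the deterministic bound
\[
d\bigl(\esd(Z_{N,k}),\esd(Z_N)\bigr)\le\frac1N\rank(Z_{N,k}-Z_N)\le\frac4k\cdot\frac{N+1}{N},
\]
uniform in $N$ and tending to $0$ with $k$. This yields \eqref{t1.eq1} for arbitrary $f\in\mathcal R$ without any moment information whatsoever; $\mu_f$ is simply the weak limit of the $\mu_{f_k}$. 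The moment formula \eqref{t1.eq2} is then established only under the additional hypothesis $f\in L^\infty$, by monotone convergence on the right-hand side together with uniqueness of the compactly supported limit. In short, your overall truncate--compare--pass-to-the-limit strategy is right, but the truncation has to be chosen so that the perturbation is low rank rather than small in norm.
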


\begin{example}
Fix $\alpha>0$. By an abuse of notation, let $\alpha(\cdot,\cdot)$ also denote the function on $(0,1)^2$, taking the constant value $\alpha$. Then, Theorem \ref{t1} reiterates the classical result of Wigner, that is,
\[
\esd(Z_N)\to\mu_\alpha\,,
\]
weakly in probability, as $N\to\infty$, where $\mu_\alpha$ is the semicircle law with standard deviation $\alpha$, that is, 
\begin{equation}\label{eq.defsc}
\mu_\alpha(dx)=\frac1{2\pi\alpha}\sqrt{4-\left(\frac x\alpha\right)^2}\,\one(|x|\le2\alpha)\,dx,\,x\in\bbr\,.
\end{equation}
Throughout the paper, whenever $\mu$ is used with a subscript which is a positive number as opposed to a function, it will refer to the probability measure defined above, and in particular, $\mu_1$ is the standard semicircle law.
\end{example}

The rest of this section is devoted to the proof of the above theorem.

\begin{lemma}\label{l1}
Suppose that $f\in L^\infty\left((0,1)^2\right)$, and let $m_n$ denote the right hand side of \eqref{t1.eq2} for $n\ge1$. Then,
\[
\limsup_{n\to\infty}m_n^{1/2n}<\infty\,.
\]
\end{lemma}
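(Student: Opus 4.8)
The plan is to bound $m_n$ by a term that grows at most exponentially in $n$, so that its $(2n)$-th root stays bounded. Recall that
\[
m_n=\sum_{\sigma\in NC_2(2n)}\int_{(0,1)^{n+1}}L_{\sigma,f}(x_1,\ldots,x_{n+1})\,dx_1\cdots dx_{n+1}\,,
\]
where each $L_{\sigma,f}$ is a product of $n$ factors of the form $f^2(\,\cdot\,,\,\cdot\,)$. Since $f\in L^\infty((0,1)^2)$, write $M:=\|f\|_\infty^2<\infty$. Then for every $\sigma\in NC_2(2n)$ and every point of $(0,1)^{n+1}$ we have the crude bound $L_{\sigma,f}\le M^{\,n}$, hence the integral over the unit cube is at most $M^{\,n}$.

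The remaining ingredient is a count of $NC_2(2n)$. It is classical (see \cite{nica:speicher:2006}) that $|NC_2(2n)|=C_n$, the $n$-th Catalan number, and that $C_n\le 4^{\,n}$. Combining the two estimates gives
\[
m_n\le |NC_2(2n)|\cdot M^{\,n}\le (4M)^{\,n}\,,
\]
and therefore $m_n^{1/2n}\le (4M)^{1/2}=2\|f\|_\infty<\infty$ for all $n\ge1$. Taking the limit superior yields the claim; in fact this shows $\limsup_{n\to\infty}m_n^{1/2n}\le 2\|f\|_\infty$.

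There is essentially no obstacle here: the only two facts used are the trivial sup-norm bound on $L_{\sigma,f}$ and the standard exponential bound on the Catalan numbers. One could tighten the constant (for instance, Wigner's theorem applied to the constant function $f\equiv\|f\|_\infty$ shows the sharp answer is $\|f\|_\infty$, matching the radius of the semicircle law of standard deviation $\|f\|_\infty$), but for the statement as given the crude estimate above suffices. The point of the lemma is simply that the moment sequence $(m_n)$ has sub-Gaussian growth of the right order, so that it determines a unique compactly supported probability measure via the moment problem — which is what is needed in the proof of Theorem \ref{t1}.
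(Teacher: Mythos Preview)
Your proof is correct and follows essentially the same route as the paper: bound each integrand by $\|f\|_\infty^{2n}$ and use the Catalan-number count of $NC_2(2n)$ to get $\limsup_{n\to\infty} m_n^{1/2n}\le 2\|f\|_\infty$ (the paper obtains this via Stirling's formula for $C_n^{1/2n}$ rather than the explicit inequality $C_n\le 4^n$). One small slip in your parenthetical aside: for constant $f\equiv c$ one has $m_n=C_n c^{2n}$, so $m_n^{1/2n}\to 2c$, which is the \emph{radius} $2c$ of the semicircle law of standard deviation $c$, not $c$ itself---hence your bound $2\|f\|_\infty$ is in fact already sharp in that case.
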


\begin{proof}Let
\[
M:=\|f\|_\infty\,,
\]
and notice that
\begin{eqnarray*}
m_n^{1/2n}&\le& M\left(\#NC_2(2n)\right)^{1/2n}\\
&\sim&2 M\,,
\end{eqnarray*}
as $n\to\infty$, the equivalence in the second line following from Stirling formula. This completes the proof.
\end{proof}

\begin{lemma}\label{l2}
If $f$ is bounded on $(0,1)^2$ in addition to being in $\mathcal R$, then the claims \eqref{t1.eq1} and \eqref{t1.eq2} of Theorem \ref{t1} hold. 
\end{lemma}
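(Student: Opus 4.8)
The plan is to prove convergence of the ESD of $Z_N$ to a non-random symmetric probability measure by the method of moments, and to identify the limiting moments with the right-hand side of \eqref{t1.eq2}. Since $f$ is bounded, say by $M$, the entries of $Z_N$ are bounded multiples of i.i.d.\ standard normals divided by $\sqrt N$, so all moments exist and the usual truncation issues do not arise. First I would write
\[
\E\left[\frac1N\Tr\left(Z_N^{2m}\right)\right]
=\frac1{N^{m+1}}\sum_{i_1,\ldots,i_{2m}=1}^N\ \prod_{k=1}^{2m}f\left(\frac{i_k}{N+1},\frac{i_{k+1}}{N+1}\right)\ \E\left[\prod_{k=1}^{2m}X_{i_k\wedge i_{k+1},i_k\vee i_{k+1}}\right]\,,
\]
with indices read cyclically. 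Odd moments vanish by symmetry of the Gaussian law; this also gives the symmetry of $\mu_f$ about zero. For even moments, Wick's (Isserlis') formula expands the Gaussian expectation as a sum over pair partitions of the $2m$ edges, and the standard Wigner combinatorics shows that only \emph{non-crossing} pairings of the edge-word contribute in the limit, each forcing the index sequence $(i_1,\ldots,i_{2m})$ to take $m+1$ free choices indexed by the blocks of the Kreweras complement $\ol\sigma$; all other pairings contribute $O(N^{-1})$.

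The key point specific to the Hadamard perturbation is the treatment of the product $\prod_{k=1}^{2m}f(\cdot,\cdot)$. Along a non-crossing pairing $\sigma$, the two indices appearing in the $k$-th factor $f(i_k/(N+1),i_{k+1}/(N+1))$ are governed by the values $\mathcal T_\sigma$ assigns to the endpoints of the edge $k$, and pairing two edges $(u,v)\in\sigma$ identifies the corresponding $f$ factors, producing $f^2$; collecting these gives exactly $L_{\sigma,f}$ evaluated at the $m+1$ block-variables $x_1,\ldots,x_{m+1}$, each $x_\ell=i_{(\ell)}/(N+1)$ with $i_{(\ell)}$ ranging over $\{1,\ldots,N\}$. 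Thus the contribution of $\sigma$ is
\[
\frac1{N^{m+1}}\sum_{i_{(1)},\ldots,i_{(m+1)}=1}^N L_{\sigma,f}\!\left(\frac{i_{(1)}}{N+1},\ldots,\frac{i_{(m+1)}}{N+1}\right)+O(N^{-1})\,,
\]
which is a Riemann sum. Here condition \eqref{defr1} (local boundedness, together with global boundedness in this lemma) and condition \eqref{defr2} (discontinuities of Lebesgue measure zero, hence of $L_{\sigma,f}$ too) let me invoke the Riemann-integrability criterion to conclude that this Riemann sum converges to $\int_{(0,1)^{m+1}}L_{\sigma,f}\,dx_1\cdots dx_{m+1}$. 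Summing over $\sigma\in NC_2(2m)$ yields \eqref{t1.eq2}.

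It remains to upgrade convergence of expected moments to weak convergence in probability of the ESD to a fixed measure $\mu_f$. For this I would (i) check that $\Var\bigl(\tfrac1N\Tr Z_N^{2m}\bigr)\to0$ by the standard second-moment computation: $\E[(\tfrac1N\Tr Z_N^{2m})^2]$ is a sum over pairs of index cycles, the factorized (leading) terms cancel against $(\E\tfrac1N\Tr Z_N^{2m})^2$, and the connected terms carry an extra factor $N^{-1}$; the bound $|f|\le M$ keeps all these sums $O(1)$ before the $N^{-1}$ gain. This gives convergence in probability of each moment to $m_{2m}:=$ RHS of \eqref{t1.eq2}. (ii) Verify these $m_{2m}$ are the moments of a unique probability measure: positivity and the Hamburger moment property hold because they are limits of moments of genuine (random) probability measures, and Lemma \ref{l1} gives $\limsup m_{2m}^{1/2m}<\infty$, so Carleman's condition holds and the limiting measure $\mu_f$ is compactly supported and uniquely determined. (iii) Conclude via the standard fact that convergence in probability of all moments to those of a measure determined by its moments implies weak convergence in probability of the empirical measures. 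The main obstacle is step (i) combined with the bookkeeping in the second paragraph: making precise that the only non-crossing pairings contribute, that the induced identification of $f$-factors is exactly the one encoded by $\mathcal T_\sigma$ and the Kreweras complement, and that the error terms really are $O(N^{-1})$ uniformly — this is where the argument of \chs\ is adapted, and care is needed because $f$ is only Riemann integrable, not continuous, so the convergence of Riemann sums must be justified via the Lebesgue-measure-zero discontinuity set rather than uniform continuity.
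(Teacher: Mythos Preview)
Your proposal is correct and follows essentially the same approach as the paper: method of moments, with \eqref{l2.eq1} and \eqref{l2.eq2} obtained via the standard Wigner combinatorics (Wick expansion, reduction to non-crossing pairings, identification of the surviving Riemann sum with $L_{\sigma,f}$), followed by an appeal to Lemma~\ref{l1} and Carleman's criterion. The paper's own proof is terse, simply asserting that ``usual combinatorial arguments'' yield \eqref{l2.eq1}--\eqref{l2.eq2}; your write-up is exactly the unpacking of that phrase, including the point that boundedness plus the measure-zero discontinuity set make $f$ (and hence $L_{\sigma,f}$) Riemann integrable so the Riemann sums converge.
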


\begin{proof}
The proof is by the method of moments.  The assumptions \eqref{defr2} and that $f$ is bounded imply that $f$ is Riemann integrable on $(0,1)^2$. Usual combinatorial arguments will show that  for all $n\ge1$,
\begin{eqnarray}
\lim_{N\to\infty}\E\left(\frac1N\Tr(Z_N^n)\right)&=&\begin{cases}
m_{n/2},&n\text{ even}\,,\\
0,&n\text{ odd}\,,
\end{cases}\label{l2.eq1}\\
\text{and, }\lim_{N\to\infty}\Var\left(\frac1N\Tr(Z_N^n)\right)&=&0\,,\label{l2.eq2}
\end{eqnarray}
where $m_n$, as defined in Lemma \ref{l1}, is the right hand side of \eqref{t1.eq2}. Therefore, there exists a probability measure whose odd moments are zero and $2n$-th moment is $m_n$ for all $n\ge1$. Lemma \ref{l1} shows that any such probability measure is necessarily supported on a compact set, and hence by the Carleman's criterion, unique. Let $\mu_f$ denote the probability measure whose moments are as above. Then clearly, $\mu_f$ is symmetric about zero, and \eqref{t1.eq1} holds. The claim \eqref{t1.eq2} is automatic in this case. This completes the proof.
\end{proof}

\begin{proof}[Proof of Theorem \ref{t1}]
Fix $f\in\mathcal R$ and $k\ge2$. Set
\begin{equation}\label{eq.deffk}
f_k:=f\one_{[1/k,1-1/k]^2}\,.
\end{equation}
Define 
\begin{equation}\label{eq.defznk}
Z_{N,k}:=A_{f_k,N}\circ W_N,\,N\ge1\,.
\end{equation}
The condition \eqref{defr1} in the definition of $\mathcal R$ ensures that $f_k$ is bounded, and thus satisfies the hypotheses of Lemma \ref{l2}. Hence there exists a symmetric probability measure $\mu_{f_k}$ such that 
\[
\esd(Z_{N,k})\to\mu_{f_k}\,,
\]
as $N\to\infty$, and for all $n\ge1$,
\begin{equation}\label{t1.eq3}
\int_\bbr x^{2n}\mu_{f_k}(dx)=\sum_{\sigma\in NC_2(2n)}\int_{(0,1)^{n+1}}L_{\sigma,f_k}(x_1,\ldots,x_{n+1})dx_1\ldots dx_{n+1}\,.
\end{equation}
In view of Fact 4.3 in \chs, if it can be shown that for all $\vep>0$, 
\begin{equation}\label{t1.eq4}
\lim_{k\to\infty}\limsup_{N\to\infty}P\left(d(\esd(Z_{N,k}),\esd(Z_{N}))>\vep\right)=0\,,
\end{equation}
where $d$ denotes the L\'evy metric, then it will follow that there exists a probability measure $\mu_f$ such that \eqref{t1.eq1} holds, and
\begin{eqnarray}
\mu_{f_k}&\weak&\mu_f\text{ as }k\to\infty\,.\label{t1.eq6}
\end{eqnarray}
Proceeding towards the proof of \eqref{t1.eq4}, Theorem A.43, page 503, \cite{bai:silverstein:2010} shows that
\begin{eqnarray*}
d(\esd(Z_{N,k}),\esd(Z_{N}))&\le&\frac1N\rank\left(Z_{N,k}-Z_N\right)\\
&\le&\frac4k\frac{N+1}N\,.
\end{eqnarray*}
This shows \eqref{t1.eq4}, which in turn establishes \eqref{t1.eq1}. 

For showing \eqref{t1.eq2}, assume that $f\in{\mathcal R}\cap L^\infty\left((0,1)^2\right)$. Defining $f_k$ as above, it follows that $f_k\uparrow f$ as $k\to\infty$, and hence by the monotone convergence theorem, the right hand side of \eqref{t1.eq3} converges to that of \eqref{t1.eq2}. Denoting the latter by $m_n$, what follows is that
\[
\lim_{k\to\infty}\int_\bbr x^{2n}\mu_{f_k}(dx)=m_n\text{ for all }n\ge1\,.
\]
Lemma \ref{l1} shows that there exists a unique probability measure $\nu$ supported on a compact set such that 
\begin{equation}\label{t1.eq5}
\int_\bbr x^n\nu(dx)=\begin{cases}
m_{n/2},&n\text{ even}\,,\\
0,&n\text{ odd}\,.
\end{cases}
\end{equation}
Therefore, as $k\to\infty$, 
\[
\mu_{f_k}\weak\nu\,.
\]
Equating this with \eqref{t1.eq6} shows that $\nu=\mu_f$, and thus \eqref{t1.eq5} establishes \eqref{t1.eq2}. This completes the proof.
\end{proof}

We end this section with the following remark. 

\begin{remark}
The condition \eqref{defr3} in the definition of $\mathcal R$ is needed for the symmetry of the matrix $A_{f,N}$, and therefore for that of $Z_N$. The other two conditions -- \eqref{defr1} and \eqref{defr2}, are essential in that the claim in \eqref{t1.eq1} will fail if any one of them is dropped, or is replaced by $f\in L^\infty\left((0,1)^2\right)$. To see that, $f$ defined on $(0,1)^2$ by
\[
f(x,y):=\begin{cases}
\sqrt n,&\text{if }x=y=\frac mn\text{ where }m,n\in\bbn\text{ and }n\text{ is prime },\\
0,&\text{otherwise },
\end{cases}
\]
satisfies \eqref{defr2} and \eqref{defr3} and violates \eqref{defr1}. If $\{p_1,p_2,\ldots\}$ is the enumeration of the prime numbers in ascending order, then it is easy to see that $\esd(Z_{p_k})$ converges to standard normal as $k\to\infty$, while $\esd(Z_{2^N})$ converges to $\delta_{\{0\}}$, as $N\to\infty$. To see that \eqref{defr2} is essential, define another $f$ on $(0,1)^2$ by 
\[
f(x,y):=\begin{cases}
1, &\text{if both }x,y\text{ are dyadic rationals },\\
0, &\text{otherwise }.
\end{cases}
\]
For this $f$, $\esd(Z_{2^N})$ converges to the semicircle law, as $N\to\infty$, while $\esd(Z_{3^N})$ converges to $\delta_{\{0\}}$. Thus,  \eqref{defr1} - \eqref{defr3} are essential for \eqref{t1.eq1}.
\end{remark}

\section{The free additive and multiplicative convolutions}\label{sec:fc} The content of this section is the following theorem which is the main result of this paper. For stating the same, it is helpful to recall the notation in \eqref{eq.defsc}.

\begin{theorem}\label{t2}
If $\nu$ is a probability measure on $\bbr$ such that $\nu([\alpha,\infty))=1$ for some $\alpha>0$, then there exists a probability measure $\eta$ such that
\[
\nu\boxtimes\mu_1=\eta\boxplus\mu_\alpha\,.
\]
\end{theorem}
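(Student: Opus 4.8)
The plan is to realize the measure $\nu\boxtimes\mu_1$ as the limiting spectral distribution of a product-type random matrix model, and then exhibit a Hadamard-product matrix sitting inside that model to which Theorem~\ref{t1} applies. Concretely, first I would choose a sequence of deterministic symmetric (or positive semidefinite) matrices $T_N$ whose ESDs converge to $\nu$; since $\nu$ is compactly supported in $[\alpha,\infty)$ one may take $T_N$ diagonal with entries $t_i^{(N)}$ obtained from the quantile function $F_\nu^{-1}(i/(N+1))$. Then, with $W_N$ the scaled Wigner matrix of \eqref{eq.defwn}, the matrix $T_N^{1/2}W_N T_N^{1/2}$ has ESD converging to $\nu\boxtimes\mu_1$ (this is the standard free multiplicative convolution interpretation of the deformed Wigner / Marchenko--Pastur-type product, which I would cite rather than reprove). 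The key observation is that $(T_N^{1/2}W_NT_N^{1/2})(i,j) = \sqrt{t_i^{(N)} t_j^{(N)}}\,W_N(i,j)$, so this matrix is exactly $A_{g,N}\circ W_N$ for the function $g(x,y):=\sqrt{F_\nu^{-1}(x)F_\nu^{-1}(y)}$, which lies in $\mathcal R$ (it is separable, symmetric, bounded below by $\alpha$, and its discontinuities are confined to a measure-zero set coming from the jumps of $F_\nu^{-1}$). Hence by Theorem~\ref{t1}, $\nu\boxtimes\mu_1=\mu_g$ with the explicit moment formula \eqref{t1.eq2} available.

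The next step is to extract the semicircle component. I would write $g(x,y)^2 = F_\nu^{-1}(x)F_\nu^{-1}(y) = \alpha^2 + h(x,y)$ where $h(x,y):=F_\nu^{-1}(x)F_\nu^{-1}(y)-\alpha^2\ge 0$, and plug this decomposition into the product $\prod_{(u,v)\in\sigma} g^2(\cdots)$ appearing in $L_{\sigma,g}$. Expanding each factor as $\alpha^2 + h$ and summing over which edges of $\sigma$ contribute the $\alpha^2$ term versus the $h$ term should reorganize the moment sum \eqref{t1.eq2} into a convolution-type identity. The target is to match this against the moment expansion of $\eta\boxplus\mu_\alpha$: recall that the free cumulants of $\mu_\alpha$ are $\kappa_2=\alpha^2$ and all others zero, so the $2n$-th moment of $\eta\boxplus\mu_\alpha$ is $\sum_{\sigma\in NC(2n)}\prod_{\text{blocks }B}\kappa_{|B|}(\text{mix})$, where blocks that are pairs may be assigned either $\alpha^2$ (from $\mu_\alpha$) or a free cumulant of $\eta$, and larger blocks get free cumulants of $\eta$. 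Comparing term by term, I expect to read off that $\eta$ must be the measure whose free cumulants are generated by the ``$h$-only'' part of the sum, and then verify that this candidate sequence of numbers is indeed a valid free cumulant sequence of a genuine probability measure (e.g. by exhibiting $\eta$ itself as an LSD: $\eta$ should be the LSD of $A_{\sqrt h + \text{(something)}}\circ W_N$ or, more cleanly, of $(T_N - \alpha^2 S_N)$-type corrections — I would look for the matrix realization that makes positivity automatic).

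An alternative and perhaps cleaner route, which I would pursue in parallel, is purely operator-algebraic: in a tracial $W^*$-probability space take a semicircular element $s$ with variance $\alpha^2$ and a free copy $s'$ of variance $1$, and a positive element $a$ with distribution $\nu$, free from everything. Since $\nu([\alpha,\infty))=1$ we have $a\ge\alpha$, so $a - \alpha^2 (\text{resolvent-type expression})$... more precisely, one wants to write $a^{1/2}s' a^{1/2} = y + s$ in distribution with $y$ free from $s$. The natural guess is to use subordination: the function-theoretic characterization of $\boxplus$ says $\nu\boxtimes\mu_1 = \eta\boxplus\mu_\alpha$ holds iff the Cauchy transform $G:=G_{\nu\boxtimes\mu_1}$ satisfies $G(z) = G_\eta(z - \alpha^2 G(z))$ for some probability measure $\eta$, equivalently iff $z\mapsto z-\alpha^2 G(z)$ maps $\mathbb C^+$ into $\mathbb C^+$ and the resulting $\eta$ is a probability measure. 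So the real analytic task reduces to: show the subordination function $\omega(z):=z-\alpha^2 G_{\nu\boxtimes\mu_1}(z)$ has nonnegative imaginary part on $\mathbb C^+$, using the known formula for $G_{\nu\boxtimes\mu_1}$ in terms of the $S$-transform of $\nu$ (or the $\Sigma$-transform), together with the support condition $\operatorname{supp}\nu\subseteq[\alpha,\infty)$.

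The main obstacle I anticipate is precisely this positivity/validity check: whichever route is taken, the combinatorial moment matching only produces a candidate sequence of numbers, and one must prove that candidate is the moment (or free cumulant) sequence of an honest probability measure $\eta$ — which is exactly the place where the hypothesis $\nu([\alpha,\infty))=1$, rather than a weaker bound, has to be used in an essential way. I expect the Hadamard-product / random-matrix approach to resolve this most transparently: if one can display a single sequence of symmetric random matrices $Y_N$, built by a Hadamard or additive modification of $Z_N=A_{g,N}\circ W_N$, whose ESD converges and which is asymptotically free from an independent $\mu_\alpha$-Wigner matrix, then $\eta:=\lim\esd(Y_N)$ is automatically a probability measure and \eqref{eq.intro} follows from asymptotic freeness of independent Wigner-type matrices. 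Identifying the right $Y_N$ — morally ``$Z_N$ with its semicircular part of variance $\alpha^2$ removed'' — and proving the requisite asymptotic freeness is where the technical work will concentrate.
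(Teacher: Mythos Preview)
Your setup --- realizing $\nu\boxtimes\mu_1$ as $\mu_g$ for the separable function $g(x,y)=\sqrt{F_\nu^{-1}(x)F_\nu^{-1}(y)}$ via the diagonal conjugation $T_N^{1/2}W_NT_N^{1/2}=A_{g,N}\circ W_N$ --- is exactly the paper's Lemma~\ref{fc.l2}, and your final paragraph (find $Y_N$ so that $Z_N\eid Y_N+\alpha W_N'$ with $W_N'$ an independent Wigner matrix, then $\eta:=\lim\esd(Y_N)$ is automatically a probability measure) is precisely the paper's strategy. What you are missing is the one idea that makes ``identifying the right $Y_N$'' immediate rather than ``where the technical work will concentrate'': the entries are \emph{Gaussian}, so the decomposition happens entrywise by variance splitting. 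Since $g(i,j)^2\ge\alpha^2$, one has for each entry
\[
g(i,j)\,X_{i,j}\ \eid\ \sqrt{g(i,j)^2-\alpha^2}\,X_{i,j}'+\alpha\,X_{i,j}''
\]
with $X',X''$ independent standard normals, and hence at the matrix level
\[
A_{g,N}\circ W_N\ \eid\ A_{\sqrt{g^2-\alpha^2},\,N}\circ W_N'\ +\ \alpha W_N''\,,
\]
with $W_N',W_N''$ independent scaled Wigner matrices. This is the content of the paper's Lemma~\ref{fc.l1} (written there as $\mu_{f+\alpha}=\mu_{\sqrt{f^2+2\alpha f}}\boxplus\mu_\alpha$, applied with $f=g-\alpha$). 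Theorem~\ref{t1} then gives $\eta=\mu_{\sqrt{g^2-\alpha^2}}$ directly, and asymptotic freeness of $A_{\sqrt{g^2-\alpha^2},N}\circ W_N'$ from the independent Wigner $W_N''$ (your Fact~\ref{f1}) finishes the proof. Your combinatorial expansion of $\prod(\alpha^2+h)$ and the subordination route are both detours around this; once you see the Gaussian split, no moment matching or positivity check for $\eta$ is needed.

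One small correction: you write ``since $\nu$ is compactly supported in $[\alpha,\infty)$'', but the hypothesis of Theorem~\ref{t2} does \emph{not} assume compact support --- dispensing with any moment/boundedness assumption on $\nu$ is the point of the theorem. Your quantile construction still works, but $g$ need not be bounded; this is exactly why the class $\mathcal R$ in Theorem~\ref{t1} allows blow-up near the boundary, and why the paper proves Lemma~\ref{fc.l1} first for the truncations $f_k$ and then passes to the limit using continuity of $\boxplus$ under weak convergence.
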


In view of Corollary 2 of \cite{biane:1997}, an immediate consequence of the above theorem is the following result.

\begin{corollary}\label{c1}
If $\nu$ satisfies the hypothesis of Theorem \ref{t2}, then $\nu\boxtimes\mu_1$ is absolutely continuous with respect to the Lebesgue measure.
\end{corollary}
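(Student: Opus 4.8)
The plan is to read off the corollary directly from Theorem \ref{t2}, using the well-known smoothing property of free additive convolution by a semicircular distribution. First I would apply Theorem \ref{t2}: since $\nu$ is supported on $[\alpha,\infty)$ for some $\alpha>0$, there exists a probability measure $\eta$ such that
\[
\nu\boxtimes\mu_1=\eta\boxplus\mu_\alpha\,.
\]
The decisive structural feature here is that the right hand side is the free additive convolution of an \emph{arbitrary} probability measure $\eta$ with the semicircle law $\mu_\alpha$ of \emph{strictly positive} standard deviation $\alpha$, as defined in \eqref{eq.defsc}.

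Next I would invoke the regularization result of \cite{biane:1997}. Biane's analysis of the free convolution with a semicircular element shows, in Corollary 2 of that paper, that $\eta\boxplus\mu_\alpha$ is absolutely continuous with respect to the Lebesgue measure whenever $\alpha>0$, irrespective of $\eta$. The mechanism behind this is that the subordination function describing $\eta\boxplus\mu_\alpha$ pushes the boundary values of the relevant Cauchy transform strictly off the real axis by an amount controlled from below by $\alpha$, so that the density recovered from the Stieltjes inversion formula is everywhere finite (indeed real-analytic on the interior of its support). Since in our decomposition $\alpha>0$, this conclusion applies verbatim.

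Combining the two facts yields the corollary at once: $\nu\boxtimes\mu_1=\eta\boxplus\mu_\alpha$ is absolutely continuous. Because all of the genuine work has already been carried out in Theorem \ref{t2} and in \cite{biane:1997}, the proof of the corollary presents no real obstacle; the single point meriting attention is purely a matter of bookkeeping, namely confirming that the normalization of $\mu_\alpha$ adopted in \eqref{eq.defsc} meets the nondegeneracy hypothesis ($\alpha>0$) under which Biane's smoothing conclusion is stated, so that his result may be quoted without any modification.
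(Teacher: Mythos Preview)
Your argument is correct and matches the paper's own reasoning exactly: the paper simply states that the corollary is an immediate consequence of Theorem \ref{t2} together with Corollary 2 of \cite{biane:1997}. Your additional commentary on the subordination mechanism is accurate but goes beyond what the paper records.
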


We now proceed towards proving Theorem \ref{t2}. One of the main ingredients of the proof is the following fact which connects random matrices and the free additive and multiplicative convolutions. This is essentially the seminal discovery of Voiculescu \cite{voiculescu:1991}. As stated below, the fact is a corollary of Proposition 22.32, page 375, \cite{nica:speicher:2006}.

\begin{fact}\label{f1}
For $N\ge1$, let $W_N$ be the scaled Wigner matrix as defined in \eqref{eq.defwn}. Suppose that $Y_N$ is a $N\times N$ random matrix, such that as $N\to\infty$,
\begin{equation}\label{f1.assume}
 \frac1N\Tr(Y_N^k)\prob\int_\bbr x^k\mu(dx),\,k\ge1\,,
\end{equation}
for some compactly supported (deterministic) probability measure $\mu$. Furthermore, let the
families $(W_N:N\ge1)$ and $(Y_N:N\ge1)$ be independent.
Then, as $N\to\infty$,
\[
 \frac1N\E_{\mathcal F}\Tr\left[(W_N+Y_N)^k\right]\prob\int_\bbr
x^k\mu\boxplus\mu_1(dx)\text{ for all }k\ge1\,,
\]
where ${\mathcal F}:=\sigma(Y_N:N\ge1)$ and $\E_{\mathcal F}$ denotes the
conditional expectation with respect to $\mathcal F$. If in addition, the support of $\mu$ is contained in $[0,\infty)$, then, as $N\to\infty$,
\[
 \frac1N\E_{\mathcal F}\Tr\left[(W_NY_N)^k\right]\prob\int_\bbr
x^k\mu\boxtimes\mu_1(dx)\text{ for all }k\ge1\,.
\]
\end{fact}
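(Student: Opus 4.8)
The plan is to deduce this from the general asymptotic-freeness result, Proposition 22.32 of \cite{nica:speicher:2006}, which asserts that a sequence of Wigner matrices and an independent sequence of deterministic matrices with a limiting $*$-distribution are asymptotically free almost surely (equivalently, after conditioning on the deterministic family, in probability). The only genuine work is to pass from the \emph{deterministic} hypothesis in that proposition to the \emph{random, convergence-in-probability} hypothesis \eqref{f1.assume} on $Y_N$, and to read off the conclusion for the sum and the product in the language of $\boxplus$ and $\boxtimes$.

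First I would record the two algebraic facts about free convolution: if $a$ and $s$ are free in a tracial $W^*$-probability space with $s$ semicircular (variance $1$) and $a$ having distribution $\mu$, then $a+s$ has distribution $\mu\boxplus\mu_1$, and, when $\mu$ is supported in $[0,\infty)$ so that $a\ge 0$, the operator $s a s$ — or equivalently $a^{1/2}sa^{1/2}$, which has the same distribution — has distribution $\mu\boxtimes\mu_1$; moreover the trace of $(W_NY_N)^k$ differs from that of a symmetrized product only by the usual cyclic rearrangement, so the moments of $W_NY_N$ converge to those of $sa$, whose $k$-th moment equals that of $\mu\boxtimes\mu_1$. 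These are standard (see \cite{nica:speicher:2006}) and I would cite them rather than reprove them. Then, granting asymptotic freeness of $(W_N)$ and $(Y_N)$, the mixed moments $\frac1N\E_{\mathcal F}\Tr[(W_N+Y_N)^k]$ and $\frac1N\E_{\mathcal F}\Tr[(W_NY_N)^k]$ are fixed universal polynomials in the moments $\frac1N\Tr(Y_N^j)$, $j\le k$, and in the (deterministic, convergent) moments of $W_N$; feeding in \eqref{f1.assume} and the continuous mapping theorem gives convergence in probability to the corresponding polynomial in the moments of $\mu$, which is exactly the $k$-th moment of $\mu\boxplus\mu_1$ (resp. $\mu\boxtimes\mu_1$).

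The main obstacle — and the step I would spell out carefully — is justifying asymptotic freeness when $Y_N$ is random and only converges in probability. The clean route is a subsequence argument: it suffices to show every subsequence has a further subsequence along which the stated convergence in probability holds. Along any subsequence, \eqref{f1.assume} upgrades to almost sure convergence of $\frac1N\Tr(Y_N^k)$ for every $k$ along a further subsequence (diagonalize over $k$); on that event the deterministic family $(Y_N)$ has a limiting $*$-distribution, namely $\mu$, and its operator norms are bounded (the moment growth is controlled because $\mu$ is compactly supported, or one truncates $Y_N$ at a large constant and controls the rank/norm of the error exactly as in the proof of Theorem \ref{t1}). Conditionally on $\mathcal F$, $W_N$ is still a genuine independent Wigner matrix, so Proposition 22.32 applies verbatim and yields $\frac1N\E_{\mathcal F}\Tr[(W_N+Y_N)^k]\to\int x^k\,\mu\boxplus\mu_1(dx)$ almost surely along that subsequence, hence in probability; the multiplicative case is identical. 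The bookkeeping around truncation (to guarantee the norm bound needed by \cite{nica:speicher:2006}) and around the conditional expectation $\E_{\mathcal F}$ is the only place where care is required; everything else is a direct quotation.
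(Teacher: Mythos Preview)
The paper does not supply its own proof of Fact~\ref{f1}; it merely records it as a corollary of Proposition~22.32 in \cite{nica:speicher:2006}, and your proposal follows exactly that route, filling in the subsequence/conditioning argument needed to pass from the deterministic hypothesis of that proposition to the random, in-probability hypothesis \eqref{f1.assume}. One small slip to fix: $sas$ and $a^{1/2}sa^{1/2}$ do \emph{not} have the same distribution in general --- what you actually need (and what the cyclic rearrangement you mention gives) is the trace identity $\tau\bigl((sa)^k\bigr)=\tau\bigl((a^{1/2}sa^{1/2})^k\bigr)$, which is enough for the moment statement.
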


The first step towards proving Theorem \ref{t2} is the following lemma.

\begin{lemma}\label{fc.l1}
If $f\in\mathcal R$ and $\alpha>0$, then
\[
\mu_{f+\alpha}=\mu_{\sqrt{f^2+2\alpha f}}\boxplus\mu_\alpha\,.
\]
\end{lemma}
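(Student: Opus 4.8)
The plan is to prove the identity by the method of moments, using Theorem \ref{t1} to represent both sides as limiting spectral distributions of explicit random matrices, and then invoking Fact \ref{f1} to identify the free additive convolution on the right-hand side. Specifically, write $g:=\sqrt{f^2+2\alpha f}$, so that pointwise $g^2 = f^2 + 2\alpha f$ and hence $(f+\alpha)^2 = g^2 + \alpha^2$. The key observation is the decomposition at the level of the matrices: with $A_{f,N}$ as in \eqref{eq.defan} and the scaled Wigner matrix $W_N$ of \eqref{eq.defwn}, one has
\[
A_{f+\alpha,N}\circ W_N = A_{g,N}\circ W_N + \alpha W_N\,,
\]
because Hadamard multiplication by the constant matrix $A_{\alpha,N}$ (all entries $\alpha$) is just scalar multiplication by $\alpha$, and the Hadamard product distributes over addition of the left factor; but this is \emph{not} quite what is needed, since $A_{g,N}\circ W_N$ and $\alpha W_N$ use the \emph{same} Wigner matrix and are therefore not asymptotically free. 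Instead I would split the randomness: let $W_N'$ and $W_N''$ be two \emph{independent} scaled Wigner matrices, set $Y_N := A_{g,N}\circ W_N'$ and observe that $Y_N + \alpha W_N''$ has, entrywise, the same distribution as $A_{f+\alpha,N}\circ W_N$. Indeed each off-diagonal entry of $A_{f+\alpha,N}\circ W_N$ is a centred Gaussian with variance $(f+\alpha)^2(i/(N+1),j/(N+1))/N = [g^2(\cdot,\cdot)+\alpha^2]/N$, which is exactly the variance of the corresponding entry of $Y_N + \alpha W_N''$; the diagonal is handled the same way and contributes negligibly.

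With that in hand the argument runs as follows. First, by Theorem \ref{t1} applied to $g\in\mathcal R$ (note $g\in\mathcal R$ because $f\mapsto\sqrt{f^2+2\alpha f}$ is continuous and non-negative on $[0,\infty)$, so conditions \eqref{defr1}--\eqref{defr3} are inherited from $f$), we have $\esd(Y_N)\to\mu_g$ weakly in probability, and in particular the moment convergence hypothesis \eqref{f1.assume} of Fact \ref{f1} holds \emph{provided} $g$ is bounded; the unbounded case is dealt with by a truncation identical to the proof of Theorem \ref{t1}, replacing $f$ by $f_k=f\one_{[1/k,1-1/k]^2}$, proving the lemma for each $f_k$, and passing to the limit using the rank bound and Fact 4.3 of \chs\ exactly as there, together with continuity of $\boxplus$ and $\boxtimes$ with respect to weak convergence of compactly supported measures. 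Second, $\alpha W_N''$ is (a rescaling of) a scaled Wigner matrix, and Fact \ref{f1} — with $W_N$ there taken to be $\alpha^{-1}(\alpha W_N'')=W_N''$, after absorbing the scalar — gives that
\[
\frac1N\E_{\mathcal F}\Tr\bigl[(Y_N+\alpha W_N'')^k\bigr]\prob \int_\bbr x^k\,\mu_g\boxplus\mu_\alpha(dx)\,,
\]
where $\mathcal F=\sigma(W_N':N\ge1)$. Third, the left-hand side is (asymptotically) the $k$-th moment of $\esd(Y_N+\alpha W_N'')$, which equals that of $\esd(A_{f+\alpha,N}\circ W_N)$ in distribution, and this converges to the $k$-th moment of $\mu_{f+\alpha}$ by Theorem \ref{t1} again. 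Matching the two limits for every $k$, and using that both $\mu_{f+\alpha}$ and $\mu_g\boxplus\mu_\alpha$ are compactly supported (Lemma \ref{l1} and the fact that free convolution of compactly supported measures is compactly supported) hence determined by their moments, yields $\mu_{f+\alpha}=\mu_g\boxplus\mu_\alpha$, which is the assertion.

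The main obstacle I anticipate is the step of replacing $A_{f+\alpha,N}\circ W_N$ by the independent-sum model $Y_N+\alpha W_N''$ in a way that genuinely lets Fact \ref{f1} apply: the two matrices agree in distribution entrywise, but Fact \ref{f1} is a statement about a \emph{fixed} coupling (the Wigner family independent of the $Y_N$ family), so one must be careful that it is legitimate to compute the LSD of $A_{f+\alpha,N}\circ W_N$ via the surrogate — this is fine because the LSD is a function of the joint law of the entries, and weak-in-probability convergence of the ESD depends only on that law. A secondary technical point is the scalar bookkeeping in Fact \ref{f1}: the fact is stated for the \emph{standard} scaled Wigner matrix, so to handle $\alpha W_N''$ one rescales — $\alpha W_N'' + Y_N = \alpha\bigl(W_N'' + \alpha^{-1}Y_N\bigr)$ — applies Fact \ref{f1} to $W_N''$ and $\alpha^{-1}Y_N$ (whose LSD is $\mu_g$ dilated by $\alpha^{-1}$, i.e. $\mu_{g/\alpha}$), obtaining $\mu_{g/\alpha}\boxplus\mu_1$, and then dilates back by $\alpha$, using $D_\alpha(\rho\boxplus\mu_1)=D_\alpha\rho\boxplus\mu_\alpha$ where $D_\alpha$ is the pushforward under $x\mapsto\alpha x$; since $D_\alpha\mu_{g/\alpha}=\mu_g$ this gives $\mu_g\boxplus\mu_\alpha$ as claimed. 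Neither obstacle is serious, but both require the care noted above.
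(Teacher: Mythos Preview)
Your approach is essentially the paper's: the key step there too is the distributional identity $U_N \eid V_N + \alpha Y_N$ (your $A_{f+\alpha,N}\circ W_N \eid A_{g,N}\circ W_N' + \alpha W_N''$) obtained from the Gaussian variance match $(f+\alpha)^2 = g^2+\alpha^2$, followed by Fact~\ref{f1} for the bounded case and then the truncation $f_k\to f$ with continuity of $\boxplus$ (Bercovici--Voiculescu) for the general case. One minor slip: your displayed identity $A_{f+\alpha,N}\circ W_N = A_{g,N}\circ W_N + \alpha W_N$ is actually false as an equation of matrices (distributivity of the Hadamard product gives $A_{f,N}\circ W_N+\alpha W_N$, not $A_{g,N}\circ W_N+\alpha W_N$), so you discard it for the wrong reason --- but since you immediately replace it with the correct distributional decomposition, this does not affect the argument you actually run.
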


\begin{proof}
Fix $k\ge2$, and let $f_k$ be as in \eqref{eq.deffk}. Define
\[
g(\cdot):=\sqrt{f_k^2(\cdot)+2\alpha f_k(\cdot)}\,.
\]
Our first task is to show that
\begin{equation}\label{fc.l1.eq4}
\mu_{f_k}=\mu_g\boxplus\mu_\alpha\,.
\end{equation}
To that end, denote for $N\ge1$, the matrices
\begin{eqnarray*}
U_N&:=&A_{f_k+\alpha,N}\circ W_N\,,\\
V_N&:=&A_{g,N}\circ W_N\,,
\end{eqnarray*}
where $A_{f,N}$ and $W_N$ are as defined in \eqref{eq.defan} and \eqref{eq.defwn} respectively. For every $N\ge1$, let $Y_N$ be a copy of $W_N$, independent of the $\sigma$-field 
\[
{\mathcal F}:=\sigma\left(X_{i,j}:1\le i\le j\right)\,.
\]
The key observation in the proof of the lemma is that
\begin{equation}\label{fc.l1.eq1}
U_N\eid V_N+\alpha Y_N,\,N\ge1\,.
\end{equation}
Since the functions $f_k$ and $g$ are both bounded, it follows by Theorem \ref{t1} that $\mu_{f_k}$ and $\mu_g$ are compactly supported, and hence so is $\mu_g\boxplus\mu_\alpha$. Thus, for showing \eqref{fc.l1.eq4}, it suffices to check that
\begin{equation}\label{fc.l1.eq5}
\int_\bbr x^n\mu_{f_k}(dx)=\int_\bbr x^n(\mu_g\boxplus\mu_\alpha)(dx)\text{ for all }n\ge1\,.
\end{equation}
It follows from \eqref{l2.eq1} and \eqref{l2.eq2} in the proof of Lemma \ref{l2} that for a fixed $n\ge1$,
\begin{equation}
\frac1N\Tr\left(V_N^n\right)\prob\int_\bbr x^n\mu_{g}(dx)\,,\label{fc.l1.eq6}
\end{equation}
and that
\begin{eqnarray}
\lim_{N\to\infty}\E\left(\frac1N\Tr(U_N^n)\right)&=&\int_\bbr x^n\mu_{f_k+\alpha}(dx)\,,\label{fc.l1.eq2}\\
\lim_{N\to\infty}\Var\left(\frac1N\Tr(U_N^n)\right)&=&0\,.\label{fc.l1.eq3}
\end{eqnarray}
Fact \ref{f1} applied to \eqref{fc.l1.eq6} implies that
\begin{equation}\label{fc.l1.eq7}
\frac1N\E_{\mathcal F}\Tr\left((V_N+\alpha Y_N)^n\right)\prob\int_\bbr x^n\mu_{g}\boxplus\mu_\alpha(dx)\,.
\end{equation}
On the other hand, \eqref{fc.l1.eq2} implies that
\[
\E\left[\frac1N\E_{\mathcal F}\Tr\left(U_N^n\right)\right]\to\int_\bbr x^n\mu_{f_k+\alpha}(dx)\,,
\]
and \eqref{fc.l1.eq3} implies that
\[
\Var\left[\frac1N\E_{\mathcal F}\Tr\left(U_N^n\right)\right]\le\Var\left(\frac1N\Tr(U_N^n)\right)\to0\,,
\]
thereby establishing that
\[
\frac1N\E_{\mathcal F}\Tr(U_N^n)\prob\int_\bbr x^n\mu_{f_k+\alpha}(dx)\,,
\]
as $N\to\infty$. Equating the above with \eqref{fc.l1.eq7} in view of \eqref{fc.l1.eq1} implies \eqref{fc.l1.eq5}, and thereby establishes \eqref{fc.l1.eq4}.

A restatement of \eqref{fc.l1.eq4} is that
\begin{equation}\label{fc.l1.eq8}
\mu_{f_k}=\mu_{\sqrt{f_k^2+2\alpha f_k}}\boxplus\mu_\alpha\,.
\end{equation}
Equation \eqref{t1.eq6} in the proof of Theorem \ref{t1} implies that the left hand side converges to $\mu_f$ as $k\to\infty$, and a similar argument shows that 
\[
\mu_{\sqrt{f_k^2+2\alpha f_k}}\weak\mu_{\sqrt{f^2+2\alpha f}}\,.
\]
An appeal to Proposition 4.13 of \cite{bercovici:voiculescu:1993} shows that the right hand side of \eqref{fc.l1.eq8} converges to $\mu_{\sqrt{f^2+2\alpha f}}\boxplus\mu_\alpha$, and thus completes the proof.
\end{proof}

\begin{lemma}\label{fc.l2}
Suppose that there exists a function $r:(0,1)\longrightarrow[0,\infty)$ such that 
\[
f(x,y)=r(x)r(y),\text{ for all }x,y\in(0,1)\,.
\]
Assume furthermore, that $r$ is Riemann integrable on any compact subset of $(0,1)$. Then, 
\[
\mu_f=\nu\boxtimes\mu_1\,,
\]
where $\nu$ is the law of $r^2(U)$, $U$ being a standard uniform random variable defined on some probability space.
\end{lemma}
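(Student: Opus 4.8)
The plan is to mimic the argument of Lemma \ref{fc.l1}, but now using the multiplicative part of Fact \ref{f1} instead of the additive part. The key algebraic observation is that when $f(x,y)=r(x)r(y)$, the matrix $A_{f,N}$ factors as $A_{f,N}=D_N \one\one^{\mathsf t} D_N$ where $D_N$ is the diagonal matrix with entries $D_N(i,i)=r(i/(N+1))$; more usefully, the Hadamard product becomes $Z_N = A_{f,N}\circ W_N = D_N W_N D_N$. Hence $Z_N$ is similar (via conjugation by $D_N$, assuming the $r$-values are nonzero, which one handles by a truncation $f_k$ as in Theorem \ref{t1} that also bounds $r$ away from $0$ on the relevant range, or simply by passing to $Z_N^2$ and using $\Tr(Z_N^n)=\Tr((D_N^2 W_N)\cdots)$) to $D_N^2 W_N$, and in any case $\Tr(Z_N^n) = \Tr\big((D_N^2 W_N)^n\big)$ since cyclically $\Tr((D_N W_N D_N)^n)=\Tr((D_N^2 W_N)^n)$. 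So the moments of $\esd(Z_N)$ agree with those of the non-symmetric matrix $Y_N W_N$ where $Y_N := D_N^2 = \mathrm{diag}(r^2(i/(N+1)))$.

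First I would reduce to the bounded case: set $f_k = f\one_{[1/k,1-1/k]^2}$ and correspondingly $r_k = r\one_{[1/k,1-1/k]}$, so that $f_k(x,y)=r_k(x)r_k(y)$, $f_k$ is bounded (using Riemann integrability of $r$ on compacts, hence local boundedness), and $\mu_{f_k}\weak\mu_f$ by \eqref{t1.eq6}. Next, for fixed $k$, let $Y_{N,k}$ be the diagonal matrix with entries $r_k^2(i/(N+1))$. Since $r_k$ is Riemann integrable and bounded, for each $j\ge1$
\[
\frac1N\Tr\big(Y_{N,k}^j\big) = \frac1N\sum_{i=1}^N r_k^{2j}\!\left(\frac i{N+1}\right) \to \int_0^1 r_k^{2j}(x)\,dx = \int_\bbr x^j\,\nu_k(dx),
\]
where $\nu_k$ is the law of $r_k^2(U)$; this is a deterministic, compactly supported measure, so the hypothesis \eqref{f1.assume} of Fact \ref{f1} holds (with $Y_{N,k}$ independent of an independent Wigner family, or rather: we apply Fact \ref{f1} with a fresh Wigner matrix $W_N$ independent of the deterministic $Y_{N,k}$). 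The multiplicative half of Fact \ref{f1} then gives $\frac1N\E_{\mathcal F}\Tr\big((W_N Y_{N,k})^n\big)\prob\int x^n\,\nu_k\boxtimes\mu_1(dx)$. On the other hand, by the combinatorial computation behind \eqref{l2.eq1}–\eqref{l2.eq2} applied to $Z_{N,k}=D_{N,k}W_N D_{N,k}$ (which has the same trace moments as $Y_{N,k}W_N$, up to the cyclic identity, and whose entries have the correct variance structure $f_k^2$), one gets $\frac1N\Tr(Z_{N,k}^n)\prob\int x^n\,\mu_{f_k}(dx)$. Equating the two limits yields $\mu_{f_k}=\nu_k\boxtimes\mu_1$.

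Finally I would pass to the limit in $k$. The left side converges weakly to $\mu_f$. For the right side, $r_k\uparrow r$ pointwise (for $r\ge0$) gives $\nu_k\weak\nu$ (the law of $r^2(U)$) by, e.g., convergence of all moments together with the compact-support uniform bound on any fixed $[1/k_0,1-1/k_0]$ — here one must be a little careful, since $r$ need not be globally bounded, so $\nu$ may only have finite moments up to... actually $r$ Riemann integrable on compacts does not force $\int r^{2j}<\infty$, so I would instead argue weak convergence of $\nu_k$ to $\nu$ directly via $r_k^2(U)\to r^2(U)$ a.s., hence in distribution. Then continuity of $\boxtimes$ under weak convergence (Proposition 4.13 of \cite{bercovici:voiculescu:1993}, the analogue used at the end of Lemma \ref{fc.l1}) gives $\nu_k\boxtimes\mu_1\weak\nu\boxtimes\mu_1$, and the identity $\mu_f=\nu\boxtimes\mu_1$ follows.

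The main obstacle is the bookkeeping around non-symmetry and non-boundedness: Fact \ref{f1} is stated for $W_N Y_N$ with $Y_N$ an honest (possibly random but here deterministic) matrix satisfying \eqref{f1.assume} with \emph{compactly supported} $\mu$, whereas our natural $Y_N$ would be $D_N^2$ with $r$ unbounded — this is exactly why the truncation to $f_k$, $r_k$ is essential, and why the whole argument must be run at the level of $\mu_{f_k}$ first and only then limited in $k$. A secondary point requiring care is justifying that $\esd(Z_{N,k})$ and $\esd(D_{N,k}^2 W_N)$ (equivalently the empirical moments of $Z_{N,k}$ versus $W_N Y_{N,k}$) have the same limit: this is the cyclic-trace identity $\Tr\big((D W D)^n\big)=\Tr\big((D^2 W)^n\big)$, valid for every $N$, so no approximation is lost there.
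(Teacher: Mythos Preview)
Your proposal is correct and follows essentially the same route as the paper: the factorization $Z_N = D_N W_N D_N$ (the paper writes $R_N$ for your $D_N$), the cyclic trace identity $\Tr\big((D_N W_N D_N)^n\big)=\Tr\big((D_N^2 W_N)^n\big)$, the application of the multiplicative half of Fact~\ref{f1} to the deterministic diagonal $Y_N=D_N^2$, and the passage from bounded/truncated $r$ to general $r$ via weak continuity of $\boxtimes$. One small correction: for continuity of $\boxtimes$ the paper invokes Corollary~6.7 of \cite{bercovici:voiculescu:1993}, not Proposition~4.13 (which is the $\boxplus$ statement used in Lemma~\ref{fc.l1}).
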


\begin{proof}
In a way, the proof is analogous to that of Lemma \ref{fc.l1}. As was the case there, we first prove the result for $r$ bounded. So assume that. As a consequence, $r^n$ is Riemann integrable on $(0,1)$, for all $n\ge1$.  For $N\ge1$, let $R_N$ be the $N\times N$ diagonal matrix with diagonal entries $r\left(\frac1{N+1}\right),\ldots,r\left(\frac N{N+1}\right)$. Clearly,
\begin{equation}\label{fc.l2.eq1}
\lim_{N\to\infty}\frac1N\Tr\left(R_N^n\right)=\int_0^1r^n(x)dx=\int_\bbr x^{n/2}\nu(dx),\,n\ge1\,.
\end{equation}

Let $Z_N$ be as in \eqref{eq.defzn} with this particular $f$. Then, it is easy to see that
\[
Z_N=R_NW_NR_N,\,N\ge1\,.
\]
Thus, for a fixed $n\ge1$, 
\begin{eqnarray*}
\frac1N\E\Tr\left(Z_N^n\right)&=&\frac1N\E\Tr\left(\left(R_N^2W_N\right)^n\right)\\
&\to&\int_\bbr x^n(\nu\boxtimes\mu_1)(dx)\,,
\end{eqnarray*}
as $N\to\infty$, the second line following from an application of Fact \ref{f1} to \eqref{fc.l2.eq1}. Equation \eqref{l2.eq1} establishes that 
\[
\lim_{N\to\infty}\frac1N\E\Tr\left(Z_N^n\right)=\int_\bbr x^n\mu_f(dx),\,n\ge1\,.
\]
These two equations prove the result for the case when $r$ is bounded. When $r$ is possibly unbounded, similar arguments as before, for example those leading to the proof of Lemma \ref{fc.l1} from \eqref{fc.l1.eq8}, with an appeal to Corollary 6.7 of \cite{bercovici:voiculescu:1993} complete the proof.
\end{proof}

\begin{proof}[Proof of Theorem \ref{t2}]
Let 
\[
r(x):=\sqrt{\inf\{y\in\bbr: x\le\nu(-\infty,y]\}},\,0<x<1\,.
\]
Define
\[
f(x,y):=r(x)r(y),\,0<x,y<1\,,
\]
and $Z_N$ by \eqref{eq.defzn}. Clearly, $f\in\mathcal R$, and the assumption that $\mu([\alpha,\infty))=1$ implies that 
\[
f(x,y)\ge\alpha\text{ for all }x,y\in(0,1)\,.
\]
Lemma \ref{fc.l2} implies that
\begin{eqnarray*}
\nu\boxtimes\mu_1&=&\mu_f\\
&=&\mu_{(f-\alpha)+\alpha}\\
&=&\eta\boxplus\mu_\alpha\,,
\end{eqnarray*}
where $\eta:=\mu_{\sqrt{(f-\alpha)^2+2\alpha(f-\alpha)}}$, the last equality following by Lemma \ref{fc.l1}. This completes the proof.
\end{proof}

\begin{remark}
As was remarked in Section \ref{sec:intro}, the author believes that the converse of Theorem \ref{t2} is true, although how to prove it is unclear. However, a weaker claim can be made, namely if the probability measure $\nu\boxtimes\mu_1$ does not charge singletons, then $\nu(\{0\})=0$. This follows easily from the observation that
\[
\nu\boxtimes\mu_1(\{0\})=\nu(\{0\})\,,
\]
a fact which can easily be proven by the random matrix argument, or otherwise; see Proposition 2 in \cite{arizmendi:abreu:2009}. 
\end{remark}

\section{Stationary Gaussian processes}\label{sec:sgp} In this section, we show that the LSD of the Hadamard product matches with that of a random matrix model whose entries come from a stationary mean zero Gaussian process. Fix $g\in\mathcal L^1\left((0,1)^2\right)$, such that
\[
g(1-x,1-y)=g(x,y)\ge0\text{ for all }x,y\,.
\]
Define
\[
R(m,n):=\int_0^1\int_0^1 e^{2\pi\iota(mx+ny)}g(x,y)dxdy,\,m,n\in\bbz\,,
\]
where $\iota:=\sqrt{-1}$. Clearly, the right hand side is real for all $m,n$, and there exists a mean zero stationary Gaussian process $(G_{i,j}:i,j\in\bbz)$ whose covariance function is $(R(m,n):m,n\in\bbz)$, that is,
\[
\E\left(G_{i,j}G_{i+m,j+n}\right)=R(m,n)\text{ for all }i,j,m,n\in\bbz\,.
\]
For $N\ge1$, define a $N\times N$ random matrix $T_N$ by
\[
T_N(i,j):=\frac{G_{i,j}+G_{j,i}}{\sqrt{N}},\,1\le i,j\le N\,.
\]
Define
\[
f(x,y):=\sqrt{g(x,1-y)+g(1-y,x)},\,(x,y)\in(0,1)^2\,.
\]
We assume that $f\in\mathcal R$ which is as in Section \ref{sec:hp}. Note that \eqref{defr3} in the definition of $\mathcal R$ holds automatically for $f$.
The following theorem is the main result of this section.

\begin{theorem}\label{t3}
As $N\to\infty$,
\[
\esd(T_N)\to\mu_f\,,
\]
weakly in probability, where $\mu_f$ is as in Theorem \ref{t1}.
\end{theorem}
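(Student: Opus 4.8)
The plan is to prove Theorem \ref{t3} by the method of moments, showing that both $\esd(T_N)$ and $\mu_f$ have the same moments, thereby reducing the claim to a purely combinatorial identity. Since Lemma \ref{l1} guarantees that $\mu_f$ is compactly supported (note $f\in L^\infty$ is not literally assumed, but the boundedness issue can be handled exactly as in the proof of Theorem \ref{t1} by the truncation $f_k$ argument and the rank bound of Theorem A.43 of \cite{bai:silverstein:2010}, so I would first reduce to the case where $g$, and hence $f$, is bounded), it suffices to verify that for all $n\ge1$,
\[
\lim_{N\to\infty}\E\left(\frac1N\Tr(T_N^n)\right)=\int_\bbr x^n\mu_f(dx)\,,\qquad \lim_{N\to\infty}\Var\left(\frac1N\Tr(T_N^n)\right)=0\,,
\]
and then invoke Carleman's criterion together with Lemma \ref{l1}. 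The variance estimate is routine Gaussian bookkeeping (fourth-moment bounds on the Wick expansion), so the heart of the matter is the expected-moment computation.

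The key step is to expand $\E\Tr(T_N^n)$ via Wick's formula. Writing $T_N(i,j)=(G_{i,j}+G_{j,i})/\sqrt N$, we get
\[
\E\left(\frac1N\Tr(T_N^n)\right)=\frac1{N^{1+n/2}}\sum_{i_1,\ldots,i_n}\E\left[\prod_{\ell=1}^n\left(G_{i_\ell,i_{\ell+1}}+G_{i_{\ell+1},i_\ell}\right)\right]\,,
\]
with indices cyclic. For $n$ odd this is zero; for $n=2m$, Wick's theorem expresses the expectation as a sum over pairings of the $2m$ factors, each pairing contributing a product of covariances $R(\cdot,\cdot)$, which we then rewrite using the integral representation $R(m,n)=\int_{(0,1)^2}e^{2\pi\iota(mx+ny)}g(x,y)\,dxdy$. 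The standard genus-expansion argument (as in the moment method for Wigner-type matrices) shows that only non-crossing pairings survive in the limit; crossing pairings and pairings that fail to maximize the number of free index summations contribute $o(1)$ because of the normalization $N^{-1-m}$. For a non-crossing pairing $\sigma\in NC_2(2m)$, the surviving index summations, after passing the $\sum$ over the $i_\ell$'s inside the product of exponentials and taking $N\to\infty$, turn into Riemann sums that converge to integrals; carefully tracking which indices are forced to coincide (this is exactly what the Kreweras complement $\ol\sigma$ and the labeling map ${\mathcal T}_\sigma$ encode, which is why those notions were set up in Section \ref{sec:hp}), the limit of the $\sigma$-term becomes
\[
\int_{(0,1)^{m+1}}\prod_{(u,v)\in\sigma}\bigl(g(x_{{\mathcal T}_\sigma(u)},1-x_{{\mathcal T}_\sigma(v)})+g(1-x_{{\mathcal T}_\sigma(v)},x_{{\mathcal T}_\sigma(u)})\bigr)\,dx_1\cdots dx_{m+1}\,.
\]
By the definition $f(x,y)=\sqrt{g(x,1-y)+g(1-y,x)}$, this integrand is precisely $L_{\sigma,f}(x_1,\ldots,x_{m+1})$ (after the change of variables $x_i\mapsto 1-x_i$ where needed, using the symmetry $g(1-x,1-y)=g(x,y)$ to keep things consistent), so summing over $\sigma\in NC_2(2m)$ reproduces exactly the right-hand side of \eqref{t1.eq2}. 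Hence the even moments match $\mu_f$ and the theorem follows.

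The main obstacle I anticipate is the bookkeeping in the limiting Riemann-sum step: one must show that the correct collapse of indices under a non-crossing pairing matches the block structure of the Kreweras complement, and that the $e^{2\pi\iota(\cdot)}$ phase factors assemble — after summing the geometric-like sums over the now-decoupled free indices and letting $N\to\infty$ — into the product of $g$'s evaluated at the right arguments with the $1-y$ reflections in the right places. The reflection $y\mapsto 1-y$ built into the definition of $f$ is exactly the device that reconciles the $G_{i,j}$ indexing (where both coordinates run "forward") with the Hadamard-product indexing in $Z_N$ (where the $(i,j)$ entry pairs $i$ and $j$ symmetrically), and getting the orientation of that reflection right for every block — including verifying it is consistent with the symmetry hypotheses $g(1-x,1-y)=g(x,y)$ and $f\in\mathcal R$ — is the delicate part. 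Once the $\sigma$-term limits are identified with the $L_{\sigma,f}$ integrals, comparison with \eqref{t1.eq2} is immediate, and the proof is complete.
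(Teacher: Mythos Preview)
Your approach is correct in spirit but takes a substantially longer route than the paper. The paper does not redo the moment computation for $T_N$ from scratch: it simply cites Theorems 2.1 and 2.4 of \cite{chakrabarty:hazra:sarkar:2014}, which already establish that $\esd(T_N)$ converges weakly in probability to a symmetric limit whose $2m$-th moment is the sum over $\sigma\in NC_2(2m)$ of exactly the $(g(\cdot,\cdot)+g(\cdot,\cdot))$-integrals you wrote down (up to the linear change of variables $x_i\mapsto\frac12+\frac{x_i}{2\pi}$). The whole proof in the bounded case is then a one-line identification of that formula with the right-hand side of \eqref{t1.eq2}. What you propose --- the Wick expansion, the Dirichlet-kernel/Riemann-sum analysis of the phase sums, and the identification of the surviving pairings as non-crossing via the Kreweras block structure --- is precisely the content of those cited theorems, so you are re-deriving \cite{chakrabarty:hazra:sarkar:2014} rather than invoking it. That is fine and arguably more self-contained, but be aware that the ``standard genus-expansion argument'' is genuinely more delicate here than in the i.i.d.\ Wigner case: the entries $G_{i,j}$ are correlated, the covariance $R(m,n)$ does not force index coincidences, and the suppression of crossing pairings comes from oscillatory cancellation in the exponential sums rather than from a naive counting of free indices.

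One genuine slip: the reduction to bounded $g$ on the $T_N$ side cannot be done ``exactly as in the proof of Theorem \ref{t1} by the rank bound.'' The rank bound worked for $Z_N$ because truncating $f$ to $f_k$ zeroes out entire rows and columns of the Hadamard product. Truncating the spectral density $g$ to $g_\vep=g\one_{[\vep,1-\vep]^2}$ produces a different Gaussian process $G^\vep$, and the difference $T_N-T_N^\vep$ has no reason to be low rank. The paper handles this step instead by appealing to Theorem 2.1 and Lemma 4.3 of \cite{chakrabarty:hazra:sarkar:2014} to obtain $\esd(T_N^\vep)\to\nu_\vep$ for every $0\le\vep<1/2$ together with $\nu_\vep\weak\nu_0$ as $\vep\downarrow0$; it then uses the already-proved bounded case to identify $\nu_\vep=\mu_{f_\vep}$ for $\vep>0$, and the fact $\mu_{f_\vep}\weak\mu_f$ from the proof of Theorem \ref{t1} to conclude $\nu_0=\mu_f$. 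If you want to keep your from-scratch approach, you will need a separate argument (e.g.\ an $L^2$ or Lindeberg-type estimate on $T_N-T_N^\vep$) to replace this citation.
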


\begin{proof}
We first show that the result is true when in addition $\|g\|_\infty<\infty$, a consequence of the assumption being that
\[
f\in{\mathcal R}\cap L^\infty\left((0,1)^2\right)\,.
\]
By Theorems 2.1 and 2.4 of \chs, it follows that
\[
\esd(T_N)\to\nu\,,
\]
weakly in probability, where $\nu$ is a probability measure which is symmetric about zero, and for $m\ge1$, its $2m$-th moment is
\begin{eqnarray*}
&&\int_\bbr x^{2m}\nu(dx)\\
&=&\sum_{\sigma\in NC_2(2m)}(2\pi)^{m-1}\int_{(-\pi,\pi)^{m+1}}\prod_{(u,v)\in\sigma}\Biggl[\frac{g\left(\frac12+\frac{x_{{\mathcal T}_\sigma(u)}}{2\pi},\frac12-\frac{x_{{\mathcal T}_\sigma(v)}}{2\pi}\right)}{4\pi^2}\\
&&\,\,\,\,\,\,\,\,\,\,\,+\frac{g\left(\frac12-\frac{x_{{\mathcal T}_\sigma(v)}}{2\pi},\frac12+\frac{x_{{\mathcal T}_\sigma(u)}}{2\pi}\right)}{4\pi^2}\Biggl]dx_1\ldots dx_{m+1}\\
&=&\sum_{\sigma\in NC_2(2m)}\int_{(0,1)^{m+1}}L_{\sigma,f}(x_1,\ldots,x_{m+1})dx_1\ldots dx_{m+1}\\
&=&\int_\bbr x^{2m}\mu_f(dx)\,,
\end{eqnarray*}
the equality in the last line following from Theorem \ref{t1}. This shows that $\nu=\mu_f$ and thus completes the proof for the case when $\|g\|_\infty<\infty$.

Now suppose that $g$ is any function satisfying the hypothesis of the theorem. For all $0\le\vep<1/2$, and define
\begin{eqnarray*}
g_\vep&:=&g\one_{[\vep,1-\vep]^2}\,,\\
f_\vep(x,y)&:=&\sqrt{g_\vep(x,1-y)+g_\vep(1-y,x)},\,(x,y)\in(0,1)^2\,.
\end{eqnarray*}
For all $0\le\vep<1/2$, let $T_N^\vep$ denote the $N\times N$ random matrix obtained from $g_\vep$, in the same way as $T_N$ is obtained from $g$.
Theorem 2.1 and Lemma 4.3 of \chs imply that for all $0\le\vep<1/2$, there exists a probability measure $\nu_\vep$ such that
\begin{equation}\label{t3.eq1}
\esd(T_N^\vep)\to\nu_\vep\,,
\end{equation}
weakly in probability, as $N\to\infty$, and
\begin{equation}\label{t3.eq2}
\nu_\vep\weak\nu_0\,\text{, as }\vep\downarrow0\,.
\end{equation}
 Since $f\in\mathcal R$, it follows that for $0<\vep<1/2$, $f_\vep$ is an  $L^\infty$ function, for which  Theorem \ref{t3} has already been shown to be true. Thus,
\begin{equation}\label{t3.eq3}
\nu_\vep=\mu_{f_\vep},\,0<\vep<1/2\,.
\end{equation}
In the proof of Theorem \ref{t1}, it has been shown that
\[
\mu_{f_\vep}\weak\mu_f\,\text{, as }\vep\downarrow0\,.
\]
In conjunction with \eqref{t3.eq2} and \eqref{t3.eq3}, this implies that
\[
\mu_f=\nu_0\,.
\]
An appeal to \eqref{t3.eq1} with $\vep=0$ completes the proof.
\end{proof}

\section*{Acknowledgment}
The author is grateful to Rajat Subhra Hazra for helpful discussions.


\begin{thebibliography}{8}
\providecommand{\natexlab}[1]{#1}
\providecommand{\url}[1]{\texttt{#1}}
\expandafter\ifx\csname urlstyle\endcsname\relax
  \providecommand{\doi}[1]{doi: #1}\else
  \providecommand{\doi}{doi: \begingroup \urlstyle{rm}\Url}\fi

\bibitem[Arizmendi and P\'erez-Abreu(2009)]{arizmendi:abreu:2009}
O.~E. Arizmendi and V.~P\'erez-Abreu.
\newblock The {S}-transform of symmetric probability measures with unbounded
  support.
\newblock \emph{Proceedings of the American Mathematical Society}, 137\penalty0
  (9):\penalty0 3057--3066, 2009.

\bibitem[Bai and Silverstein(2010)]{bai:silverstein:2010}
Z.~Bai and J.~W. Silverstein.
\newblock \emph{Spectral analysis of large dimensional random matrices}.
\newblock Springer Series in Statistics, New York, second edition, 2010.

\bibitem[Bercovici and Voiculescu(1993)]{bercovici:voiculescu:1993}
H.~Bercovici and D.~Voiculescu.
\newblock Free convolution of measures with unbounded support.
\newblock \emph{Indiana University Mathematics Journal}, 42:\penalty0 733--773,
  1993.

\bibitem[Biane(1997)]{biane:1997}
P.~Biane.
\newblock On the free convolution with a semi-circular distribution.
\newblock \emph{Indiana University Mathematics Journal}, 46\penalty0
  (3):\penalty0 705--718, 1997.

\bibitem[Chakrabarty and Hazra(2015)]{chakrabarty:hazra:2015}
A.~Chakrabarty and R.~S. Hazra.
\newblock Remarks on absolute continuity in the context of free probability and
  random matrices.
\newblock \emph{Proceedings of the American Mathematical Society}, 2015.
\newblock Published electronically on June 30, 2015.

\bibitem[Chakrabarty et~al.(2014)Chakrabarty, Hazra, and
  Sarkar]{chakrabarty:hazra:sarkar:2014}
A.~Chakrabarty, R.~S. Hazra, and D.~Sarkar.
\newblock From random matrices to long range dependence.
\newblock Available at \url{http://arxiv.org/pdf/1401.0780.pdf}, 2014.

\bibitem[Nica and Speicher(2006)]{nica:speicher:2006}
A.~Nica and R.~Speicher.
\newblock \emph{Lectures on the Combinatorics of Free Probability}.
\newblock Cambridge University Press, New York, 2006.

\bibitem[Voiculescu(1991)]{voiculescu:1991}
D.~Voiculescu.
\newblock Limit laws for random matrices and free products.
\newblock \emph{Inventiones mathematicae}, 104\penalty0 (1):\penalty0 201--220,
  1991.

\end{thebibliography}

\end{document}